\def\Z {\mathbb{Z}}
\def\Q {\mathbb{Q}}
\def \co{\colon\!}
\def \ftnote{\let\thefootnote\relax\footnotetext}
\def\cat{{\mathop\mathrm{cat}\,}}
\def\TC{{\mathop\mathrm{TC}\,}}
\newtheorem{theorem}{Theorem}
\newtheorem{problem}[theorem]{Problem}
\newtheorem{lemma}[theorem]{Lemma}
\newtheorem{corollary}[theorem]{Corollary}
\newtheorem{proposition}[theorem]{Proposition}
\theoremstyle{remark}
\title[]{The topological complexity of the free product}
\author{Alexander Dranishnikov, Rustam Sadykov}
\date{}
\begin{document}
\address{A. Dranishnikov, Department of Mathematics, University
of Florida, 358 Little Hall, Gainesville, FL 32611-8105, USA}
\email{dranish@math.ufl.edu}

\address{R. Sadykov, Mathematics Department, Kansas State University,
138 Cardwell Hal,l Manhattan, KS 66506}
\email{sadykov@math.ksu.edu}
\maketitle

\begin{abstract} 
We prove the formula
\[
   \TC(G\ast H)=\max\{\TC(G), \TC(H), cd(G\times H)\}
\]
for the topological complexity of the free product of groups with cohomological dimension $\ge 3$.
\end{abstract}

\section{Introduction}

Let $X$ be a topological space. The {\emph{Lusternik-Schnirelmann category} (LS-category)  $\cat X$ of  $X$ is the least number $n$ such that there is a covering  $\{U_i\}$ of $X$ by $n+1$ open sets $U_i$ contractible in $X$ to a point. The LS-category has a number of interesting applications ~\cite{CLOT}. 
A \emph{motion planning algorithm} over an open subset $U_i\subset X\times X$ is a continuous map $U_i\to X^{[0,1]}$ that takes a pair $(x,y)$ to a path $s$ with end points $s(0)=x$ and $s(1)=y$.
The \emph{topological complexity} $\TC(X)$ of  $X$ is the least number $n$  such that there is a covering $\{U_i\}$ of $X\times X$ by $n+1$ open sets over which there are motion planning algorithms~\cite{Fa1}. We note that in this paper we consider the reduced LS-category and the reduced topological complexity.

Both $\cat X$ and $\TC(X)$ are homotopy invariant. Thus one can define the LS-category and the topological complexity of a discrete group $G$ by setting $\cat G=\cat BG$ and $\TC(G)=\TC(BG)$ where $BG=K(G,1)$ is a classifying spaces for $G$. It is well-known that $\cat G$ does not give a new invariant~\cite{EG},\cite{St},\cite{Sw}.
\begin{theorem}[Eilenberg-Ganea]\label{th:1}
For all groups $\cat G=cd(G)$. 
\end{theorem}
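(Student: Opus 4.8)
The plan is to establish the two inequalities $\cat G\le cd(G)$ and $\cat G\ge cd(G)$ by separate methods: Ganea's fibration characterization of LS--category for the upper bound, and the Berstein--Schwarz canonical class for the lower bound.

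For the upper bound I would use the fact that $\cat X\le n$ if and only if the $n$-th Ganea fibration $g_n\co G_n(X)\to X$ admits a section, the fiber of $g_n$ being the $(n+1)$-fold join $F_n=(\Omega X)^{*(n+1)}$. For $X=BG$ the loop space $\Omega BG$ is homotopy equivalent to the discrete group $G$, so $F_n\simeq G^{*(n+1)}$, the join of $n+1$ copies of a discrete set. Such a join is homotopy equivalent to a wedge of $n$-spheres, hence $(n-1)$-connected with $\pi_k(F_n)=0$ for $k<n$. Setting $n=cd(G)$, I would run obstruction theory for a section of $g_n$ over a CW model of $BG$. Because $F_n$ is $(n-1)$-connected, a section extends freely over the $n$-skeleton; the obstruction to each further extension over the $(k+1)$-skeleton lies in $H^{k+1}(BG;\pi_k(F_n))$ with local coefficients, and for $k+1>n$ this group equals the group cohomology $H^{k+1}(G;\pi_k(F_n))$, which vanishes since $cd(G)=n$. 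Pushing the section across every skeleton yields a global section, so $\cat BG\le n=cd(G)$.

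For the lower bound I would invoke the cup-length estimate for LS--category with local coefficients: since $\cat X$ equals the sectional category of the path fibration $PX\to X$ and $PX$ is contractible, every reduced cohomology class pulls back to zero, so $\cat X\ge m$ whenever some $m$-fold cup product of reduced classes is nonzero. Let $I$ denote the augmentation ideal of $\Z[G]$ and $\beta\in H^1(G;I)$ the Berstein--Schwarz class. The cup powers $\beta^m\in H^m(G;I^{\otimes m})$ satisfy the standard identity $\max\{m:\beta^m\ne 0\}=cd(G)$, the top nonvanishing power detecting cohomological dimension exactly. Feeding $\beta^{cd(G)}\ne 0$ into the cup-length bound gives $\cat BG\ge cd(G)$, completing the equality.

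The main obstacle is the upper bound precisely when the geometric dimension of $G$ might exceed $cd(G)$, the Eilenberg--Ganea phenomenon at $cd=2$. There the naive bound $\cat X\le\dim X$ only yields $\cat BG\le gd(G)$, potentially too large. The obstruction-theoretic argument circumvents this because the obstruction classes are valued in group cohomology $H^{k+1}(G;-)$, which is identically zero above $cd(G)$ regardless of how many cells the model of $BG$ carries in high dimensions. The delicate point is to check that at each stage the vanishing of the obstruction \emph{class}, not merely of the obstruction cocycle, is enough: one redefines the partial section on the top cells, keeping it fixed on the previous skeleton, before extending across the next one.
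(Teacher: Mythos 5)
Your proposal is correct, and your lower bound is exactly the paper's: the Berstein--Schwarz class $\beta$ with $\beta^{cd(G)}\neq 0$ fed into the cup-length estimate for sectional category. The upper bound is where you genuinely diverge in organization. The paper first invokes the Eilenberg--Ganea and Stallings--Swan results that $cd(G)=gdim(G)$ whenever $cd(G)\neq 2$, disposes of all such groups with the crude bound $\cat X\le\dim X$, and runs obstruction theory on the Ganea fibration only in the single residual case $cd(G)=2$, $\dim BG=3$, where the fiber of $G^2_{BG}\to BG$ is simply connected and the sole remaining obstruction sits in $H^3(BG;\mathcal F)=0$. You instead run the obstruction-theoretic argument uniformly for every $n=cd(G)$ and every CW model of $BG$, using that the fiber $*^{n+1}G$ is a wedge of $n$-spheres, hence $(n-1)$-connected, and that every higher obstruction lands in group cohomology in degree above $cd(G)$. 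What this buys is independence from the geometric-dimension theorems: your argument proves the upper bound without ever needing to know that potential $cd\neq gdim$ phenomena are confined to $cd=2$. The cost is a small gap at $n=1$: there the fiber $G*G$ is a graph with nonabelian free $\pi_1$, so the obstruction formalism with abelian local coefficients does not directly apply to the extension over the $2$-skeleton; you should either quote Stallings--Swan to take $BG$ one-dimensional in that case or handle it separately. For $n\ge 2$ the fiber is simply connected and your argument is clean; your closing remark about modifying the section on top cells when only the obstruction class (not the cocycle) vanishes is exactly the right point of care.
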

Here $cd(G)$
 is the cohomological dimension of $G$. By the Eilenberg-Ganea theorem $cd(G)$ equals the geometric dimension 
$gdim(G)=\min\{\dim BG\}$ of $G$ provided $cd(G)\ne 2$~\cite{Br}. {\em The Eilenberg-Ganea conjecture} states
that the equality $cd(G)=gdim(G)$ holds for all groups. A potential  counterexample to Eilenberg-Ganea conjecture should have $cd(G)=2$ and $gdim(G)=3$~\cite{Br}. 

The topological complexity of a group is secluded in the range $cd(G)\le \TC(G)\le 2cd(G)$ and any value between can be taken~\cite{Ru}.
Computation of topological complexity of a group is a great challenge.  Since $\TC(G)=\infty$
for groups with torsion, this invariant makes sense only for torsion free groups. It was computed only for few classes of groups.
Thus, the topological complexity of free abelian group equals the rank. The topological complexity of nilpotent groups is estimated in~\cite{Gr}.
Computation of topological complexity of surface groups orientable and non-orientable was only recently completed~\cite{Fa1},\cite{Dr3},\cite{CV}.
Even more recent is the computation of $\TC$ of hyperbolic groups~\cite{FM}.

In this paper we present a formula for topological complexity of free product of groups.
\begin{theorem}\label{th:2}
If both  groups  $G$ and $H$ are not counterexamples to the Eilenberg-Ganea conjecture, then
\[
   \TC(G\ast H)=\max\{\TC(G), \TC(H), cd(G\times H)\}.
\]
\end{theorem}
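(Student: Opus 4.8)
The plan is to use the homotopy equivalence $B(G\ast H)\simeq BG\vee BH$, so that the problem becomes the computation of $\TC$ of the wedge $X:=BG\vee BH$; note that both $X$ and $A:=BG\times BH=B(G\times H)$ are aspherical, and write $M:=\max\{\TC(G),\TC(H),cd(G\times H)\}$. I would prove the two inequalities $\TC(X)\ge M$ and $\TC(X)\le M$ separately. Throughout I would use the reformulation that a covering $\{U_i\}$ of $X\times X$ admits motion planning algorithms over each $U_i$ exactly when the two coordinate projections $\mathrm{pr}_1,\mathrm{pr}_2\co U_i\to X$ are homotopic, so that $\TC(X)$ is the Schwarz genus (sectional category) of the free path fibration $X^{[0,1]}\to X\times X$.

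For the lower bound I would bound $M$ term by term. Since $BG$ and $BH$ are retracts of the wedge $X$ (collapse the other summand) and $\TC$ is monotone under retracts, one gets $\TC(X)\ge\TC(G)$ and $\TC(X)\ge\TC(H)$. For the third term I would restrict the path fibration to the subspace $A=BG\times BH\subset X\times X$; its sectional category is at most $\TC(X)$ and equals the sectional category of the pullback along the inclusion, i.e. the homotopic distance between the two maps $f:=\iota_G\circ\mathrm{pr}_1$ and $g:=\iota_H\circ\mathrm{pr}_2$ of $A$ into $X$ recording the two endpoints. Because cross cup products vanish in the cohomology of a wedge, the pulled-back classes $\mathrm{pr}_1^*\tilde H^*(BG)$ and $\mathrm{pr}_2^*\tilde H^*(BH)$ act as zero-divisors for this restricted fibration and have nonzero product in $H^*(A)=H^*(BG)\otimes H^*(BH)$; replacing ordinary cup products by powers of the Berstein--Schwarz class of the aspherical space $A$, this estimate is sharpened to $\cat(A)=cd(G\times H)$. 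Hence $\TC(X)\ge cd(G\times H)$, completing the lower bound.

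The upper bound is the main obstacle. I would cover $X\times X$ by the four blocks $BG\times BG$, $BH\times BH$, $BG\times BH$, $BH\times BG$ and build motion planners blockwise. Over the diagonal blocks a path may remain inside a single summand, so the motion planners for $BG$ and for $BH$ apply and require $\TC(G)+1$ and $\TC(H)+1$ open sets respectively. Over an off-diagonal block, say $BG\times BH$, every path from the first to the second point must pass through the wedge point $\ast$, so a local motion planner is exactly a local homotopy between $f$ and $g$; by the upper bound for homotopic distance this block is handled with $\cat(A)+1=cd(G\times H)+1$ open sets. The real difficulty is that naively these covers would add up, whereas one must amalgamate them into a single cover of $X\times X$ by only $M+1$ open sets. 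Here the Eilenberg--Ganea hypothesis is essential: since neither $G$ nor $H$ is a counterexample, $gdim=cd$, so I may choose CW models $BG$, $BH$ of dimensions $cd(G)$, $cd(H)$. This dimensional control lets me use obstruction theory and general position to interpolate the three families of local sections across contractible neighborhoods of the wedge slices $BG\times\ast$ and $\ast\times BH$, merging sets of matching index and thereby keeping the total count at the maximum rather than the sum.

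I expect the merging step to be where essentially all the work lies: reconciling a motion planner that keeps paths within a single summand with one that routes paths through $\ast$, on the overlaps where a coordinate is near $\ast$, without increasing the number of open sets. The dimensional bound $gdim=cd$ is exactly what makes the requisite extensions of partial sections unobstructed, and it is the reason the hypothesis on the Eilenberg--Ganea conjecture cannot be removed by this method.
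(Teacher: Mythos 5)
Your four-block decomposition of $(BG\vee BH)^2$ and your choice of local planners (retractions for the diagonal blocks, routing through the wedge point for the off-diagonal blocks) match the paper's strategy, and your lower bound is essentially the known one from \cite{Dr1}. But the merging step, which you yourself flag as ``where essentially all the work lies,'' is precisely the content of the theorem, and ``obstruction theory and general position'' over ``contractible neighborhoods of the wedge slices'' does not supply it: there is no reason why open covers of the four blocks, each with its own planner, can be thickened and amalgamated index-by-index into a cover of the whole square. The paper never merges open covers at all. It reformulates $\TC(X\vee Y)\le k$ as the existence of a single global section of the fiberwise join $\Delta^k_{X\vee Y}=*^{k+1}_{(X\vee Y)^2}P(X\vee Y)$, whose fiber $*^{k+1}\Omega(X\vee Y)$ is $(k-1)$-connected. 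Sections of this one fibration over the four closed blocks are produced exactly as you describe, and are then pasted in two stages, over $A=(X\times X)\cup(Y\times Y)$ and $B=(X\times Y)\cup(Y\times X)$ and then over $A\cup B$; the intersections involved have dimension at most $d=\max\{\dim BG,\dim BH\}$, so the only obstruction to fiberwise-homotoping one section into another over them lies in groups that vanish once the fiber is $d$-connected, i.e.\ once $k\ge d+1$. That connectivity of the join fiber, not general position, is what makes the extensions unobstructed.

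The second missing ingredient is the verification that $k=\max\{\TC G,\TC H,\cat(BG\times BH)\}\ge d+1$, without which the pasting numerology does not close. The Eilenberg--Ganea hypothesis enters only to let you take $\dim BG=cd(G)$ and $\dim BH=cd(H)$; you still need $\cat(BG\times BH)=cd(G\times H)\ge\max\{cd(G),cd(H)\}+1$, which the paper proves by noting that a group of finite cohomological dimension is torsion-free, hence contains a copy of $\Z$, so $cd(G\times H)\ge cd(G\times\Z)\ge cd(G)+1$. Your proposal neither states nor uses this inequality, and without it the hypothesis $gdim=cd$ alone does not make the interpolation work. I would also note that your claim that the sectional category of the path fibration restricted to $BG\times BH$ \emph{equals} $\cat(BG\times BH)$ is asserted rather than proved; the paper establishes the needed inequality by an explicit map of fibrations onto the Ganea fibration $G^k_{BG\times BH}$, which is cleaner than the cohomological sharpening you sketch.
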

In view of the inequality $\TC(X)\le\cat(X\times X)$ we obtain the following:
\begin{corollary} If $G$ is not a counterexample to the Eilenberg-Ganea conjecture, then
\[
   \TC(G\ast G)=cd(G\times G).
\]
\end{corollary}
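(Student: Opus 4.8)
The plan is to deduce the corollary directly from Theorem~\ref{th:2} by setting $H=G$ and then showing that the term $\TC(G)$ is absorbed into $cd(G\times G)$, so that the maximum collapses to a single term. First I would specialize the main formula to $H=G$; since the two copies of $G$ are both hypothesized to be non-counterexamples to the Eilenberg-Ganea conjecture, the theorem applies and gives
\[
   \TC(G\ast G)=\max\{\TC(G),\,cd(G\times G)\}.
\]
It therefore remains only to establish the inequality $\TC(G)\le cd(G\times G)$, after which the maximum is automatically realized by the second entry.

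For that inequality I would invoke the standard estimate $\TC(X)\le\cat(X\times X)$ quoted in the excerpt, applied to $X=BG$. The key observation is that $BG\times BG$ is itself an Eilenberg-MacLane space $K(G\times G,1)$, hence a classifying space $B(G\times G)$; consequently $\cat(BG\times BG)=\cat(G\times G)$. By the Eilenberg-Ganea theorem (Theorem~\ref{th:1}) the latter equals $cd(G\times G)$. Chaining these identities together yields
\[
   \TC(G)=\TC(BG)\le\cat(BG\times BG)=\cat(G\times G)=cd(G\times G),
\]
exactly the bound needed to collapse the maximum above.

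I do not expect a serious obstacle: the corollary is essentially a bookkeeping consequence of Theorem~\ref{th:2} combined with two well-known facts, namely the $\TC$-versus-$\cat$ inequality and the Eilenberg-Ganea identity $\cat=cd$. The only point requiring a moment's care is the homotopy identification $BG\times BG\simeq B(G\times G)$, which is what permits the product of classifying spaces to be regarded as a single classifying space and thereby brings $cd(G\times G)$ into the estimate; once that identification is in place the inequality $\TC(G)\le cd(G\times G)$ is immediate, and the formula of Theorem~\ref{th:2} reduces to the asserted identity $\TC(G\ast G)=cd(G\times G)$.
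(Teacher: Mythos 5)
Your argument is correct and matches the paper's intended proof: the paper derives this corollary from Theorem~\ref{th:2} precisely via the inequality $\TC(X)\le\cat(X\times X)$ applied to $X=BG$, together with the identification $BG\times BG\simeq B(G\times G)$ and the Eilenberg--Ganea equality $\cat=cd$, which is exactly your chain of reasoning. No issues.
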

\begin{corollary}
$$\TC(\Z^m\ast\Z^n)=m+n.$$
\end{corollary}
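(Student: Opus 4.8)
The plan is to specialize Theorem~\ref{th:2} to $G=\Z^m$ and $H=\Z^n$. First I would check that the hypothesis holds: the $k$-dimensional torus $T^k=(S^1)^k$ is a classifying space for $\Z^k$, so $gdim(\Z^k)=cd(\Z^k)=k$, and in particular $\Z^m$ and $\Z^n$ are never counterexamples to the Eilenberg--Ganea conjecture. Thus Theorem~\ref{th:2} applies and gives
\[
\TC(\Z^m\ast\Z^n)=\max\{\TC(\Z^m),\TC(\Z^n),cd(\Z^m\times\Z^n)\}.
\]

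Next I would evaluate the three quantities on the right. Since the topological complexity of a free abelian group equals its rank, $\TC(\Z^m)=m$ and $\TC(\Z^n)=n$. For the last term, $\Z^m\times\Z^n\cong\Z^{m+n}$, whence $cd(\Z^m\times\Z^n)=m+n$. Because $m+n\ge m$ and $m+n\ge n$, the maximum is attained by the third term, and the right-hand side collapses to $m+n$, which is the desired formula. Equivalently, one observes that both $\TC(\Z^m)$ and $\TC(\Z^n)$ are dominated by $cd(\Z^m\times\Z^n)$, exactly as in the proof of the preceding corollary.

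I do not anticipate a genuine obstacle, since the statement is an immediate specialization of Theorem~\ref{th:2}; the only steps requiring attention are verifying the Eilenberg--Ganea hypothesis for abelian groups and recording the standard identity $\TC(\Z^k)=k$. The one point I would double-check is the low-dimensional range: if Theorem~\ref{th:2} is established only for summands of cohomological dimension at least three, then the finitely many cases with $m\le 2$ or $n\le 2$ would have to be confirmed separately. In every case, however, the value $\max\{m,n,m+n\}=m+n$ is insensitive to which term happens to dominate, so the conclusion is unaffected.
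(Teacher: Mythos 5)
Your proposal is correct and matches the intended derivation: the paper presents this as an immediate consequence of Theorem~\ref{th:2}, using that $T^k$ realizes $gdim(\Z^k)=cd(\Z^k)=k$ (so no Eilenberg--Ganea issue arises), that $\TC(\Z^k)=k$, and that $cd(\Z^{m+n})=m+n$ dominates the maximum. Your worry about a cohomological-dimension-$\ge 3$ restriction is unfounded, since Theorem~\ref{th:2} as stated only excludes counterexamples to the Eilenberg--Ganea conjecture, which necessarily have $cd=2$ and so never include $\Z^k$.
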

\begin{corollary}[Yu. Rudyak]
For any $n$ and $k$ with $n\le k\le 2n$ there is a group $G$ having $cd(G)=n$ and $\TC(G)=k$
\end{corollary}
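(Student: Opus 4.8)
The plan is to realize every admissible pair $(n,k)$ directly by a free product of free abelian groups, using the corollary $\TC(\Z^m\ast\Z^{m'})=m+m'$ established just above. The crucial preliminary observation is that free abelian groups are aspherical with torus classifying spaces, so $gdim(\Z^m)=m=cd(\Z^m)$; in particular no $\Z^m$ is a counterexample to the Eilenberg--Ganea conjecture, and the hypotheses of Theorem~\ref{th:2} (and hence of the corollary for $\Z^m\ast\Z^{m'}$) are automatically satisfied for any such free product. This is what licenses every application below.

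Concretely, I would split into two cases. When $k=n$, take $G=\Z^n$: then $cd(G)=n$, and $\TC(G)=n=k$ because the topological complexity of a free abelian group equals its rank. When $n<k\le 2n$, set $b=k-n$ and take $G=\Z^n\ast\Z^b$; the arithmetic constraint $n\le k\le 2n$ is exactly what forces $1\le b\le n$, which is the range in which the construction behaves correctly. It then remains to compute the two invariants of $G$. For the cohomological dimension I would use that $B(\Z^n\ast\Z^b)\simeq B\Z^n\vee B\Z^b$, so that in positive degrees the cohomology splits as a direct sum and $cd(\Z^n\ast\Z^b)=\max\{n,b\}=n$, the last equality holding precisely because $b\le n$. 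For the topological complexity I would invoke the corollary $\TC(\Z^m\ast\Z^{m'})=m+m'$ with $(m,m')=(n,b)$, which gives $\TC(G)=n+b=k$.

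I do not anticipate a genuine obstacle here, since the statement is essentially a repackaging of facts already in hand; the only points needing care are bookkeeping ones. One must check that $b=k-n$ stays in $[1,n]$ so that the maximum defining $cd$ is attained by the first factor $\Z^n$, and one must separate off the boundary case $k=n$ (where $b=0$ and the free-product corollary does not literally apply, the group degenerating to $\Z^n$). With these minor cases dispatched, the family $G=\Z^n\ast\Z^{k-n}$ realizes the full interval $n\le k\le 2n$ of values of $\TC$ at fixed $cd=n$, recovering Rudyak's theorem.
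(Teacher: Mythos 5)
Your proposal is correct and follows essentially the same route as the paper, which simply takes $G=\Z^{k-n}\ast\Z^{n}$ and applies the preceding corollary $\TC(\Z^m\ast\Z^n)=m+n$. Your additional bookkeeping (separating the degenerate case $k=n$ and verifying $cd(\Z^n\ast\Z^{k-n})=n$) only makes explicit what the paper leaves implicit.
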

\begin{proof}
Take $G=\Z^m\ast\Z^n$ for $m=k-n$.
\end{proof}

REMARK 1. If $G$ (or $H$) is a counterexample to the Eilenberg-Ganea conjecture having $\TC(G)=4$ ($\TC(H)=4$), then still Theorem~\ref{th:2} holds true. 

REMARK 2. Theorem~\ref{th:2} holds for groups $G$ and $H$ with $cd(G)\ge cd(H)\ge 2$ and $cd(G\times H)\ge cd(G)+2$.
We beleive that the latter inequality holds true for all geometrically finite groups $H$ with $cd(H)\ge 2$.
We note that raising dimension by 2 of the Cartesian product with a nonfree group is the maximal possible in view of an example in~\cite{Dr2} of geometrically finite groups $G$ and $H$  with $cd(G)=cd(H)=3$ and  $cd(G\times H)\le 5$. The groups $G$ and $H$ in \cite{Dr2} are finite index subgroups of right-angled Coxeter groups
constructed by an appropriate choice of the nerves. Also we note that in the class of infinitely generated groups there is a counterexample: $cd(\Q)=2$ while $cd(\Q\times\Q)=3$.
We are thankful to David Recio-Mitter for the latter remark.

\section{Topological complexity of wedge}

It was proved in~\cite{Dr1}, Theorem 3.6,  that 
\[
   \max\{\TC X, \TC Y, \cat (X\times Y)\}\le  \TC(X\vee Y).
\]
  We show that under certain conditions the lower estimate is exact. 

\begin{theorem}\label{th:3}  Let  $d=\max\{\dim X,\dim Y\}$ for connected CW-complexes $X$ and $Y$. Suppose
that $\max\{\TC X,\TC Y,\cat(X\times Y\}\ge d+1$. Then
\[
\TC(X\vee Y)=   \max\{\TC X, \TC Y, \cat (X\times Y)\}.
\]
\end{theorem}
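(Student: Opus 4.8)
The plan is to establish the reverse inequality
\[
\TC(X\vee Y)\le \max\{\TC X,\TC Y,\cat(X\times Y)\}=:N
\]
under the hypothesis $N\ge d+1$, since the opposite inequality is quoted from \cite{Dr1}. The standard tool for bounding $\TC$ from above is a covering of $(X\vee Y)\times(X\vee Y)$ by open sets admitting motion planners, equivalently a covering by sets on which the difference/evaluation map to a path space sections; here I would work with the sectional category (Schwarz genus) of the free path fibration $P\co (X\vee Y)^{[0,1]}\to (X\vee Y)\times(X\vee Y)$, whose genus is exactly $\TC(X\vee Y)$. Let me first decompose the target of this fibration according to the wedge structure.

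**First I would** write $Z=X\vee Y$ and split the product $Z\times Z$ into four pieces coming from the two ``halves'' of the wedge: the diagonal-type blocks $X\times X$ and $Y\times Y$, and the off-diagonal blocks $X\times Y$ and $Y\times X$. Over $X\times X$ and $Y\times Y$ the path fibration of $Z$ restricts, up to the relevant homotopies, to something controlled by $\TC X$ and $\TC Y$ respectively, because a motion planner on $X$ (resp.\ $Y$) gives one on $Z$ by composing with the inclusion. The genuinely mixed blocks $X\times Y$ and $Y\times X$ are where the two factors must be joined through the wedge point; here a motion between a point of $X$ and a point of $Y$ factors through the basepoint $*$, so planning reduces to contracting $x$ to $*$ in $X$ and $*$ to $y$ in $Y$, and the obstruction to doing this over open sets is governed by $\cat(X\times Y)$ (after all, $X\times Y$ fibers into the two contraction problems, and $\cat$ of a product sees exactly the sectional category of the product of the two based path fibrations). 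The key numerical point is that a sectional-category (or LS-category) cover of a CW-complex of the relevant dimension $d$ can be realized with at most $d+1$ open sets, and the hypothesis $N\ge d+1$ is precisely what lets the dimension-sensitive pieces be absorbed into the bound $N$ rather than forcing an extra open set.

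**The technical heart** is to glue the covers of the four blocks into a single open cover of $Z\times Z$ of cardinality $N+1$ rather than the naive sum of the four cardinalities. For this I expect to use the Schwarz/Ostrand-style additivity together with a dimension argument: when one of the invariants already reaches or exceeds $d+1$, the corresponding genus cover has enough open sets that the remaining blocks can be merged into them without increasing the count, using that the fibration restricted to a neighborhood of the ``seam'' (where $X$ and $Y$ meet along the basepoint) is sectionable over a set of controlled category. Concretely I would take the maximum-achieving cover among the three candidates, and show that the complementary blocks, each of sectional category at most $d$, can be distributed across its open sets by an Ostrand-type colimit/overlap argument — this is exactly the mechanism by which $\cat$ and $\TC$ of a CW-complex never exceed its dimension, and the inequality $N\ge d+1$ guarantees the ``room'' needed.

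**I expect the main obstacle** to be the bookkeeping at the wedge seam: the open sets covering $X\times X$ and $Y\times Y$ cannot be chosen arbitrarily, because paths originating near the basepoint in one factor may need to swing through $*$ and continue into the other factor, so the motion planners must agree (or at least be compatible up to homotopy) on a neighborhood of the locus $\{*\}\times Z\cup Z\times\{*\}$. Handling this compatibility — so that the locally defined sections patch into a global cover of the promised cardinality without an off-by-one loss — is where the hypothesis on $d$ and the CW structure will be used most delicately, and it is the step I would write out in full detail rather than treat as routine.
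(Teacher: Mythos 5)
Your decomposition of $(X\vee Y)\times(X\vee Y)$ into the four blocks $X\times X$, $Y\times Y$, $X\times Y$, $Y\times X$, and your identification of which invariant controls which block (including the observation that motion between $X$ and $Y$ factors through the wedge point, so the mixed blocks are governed by $\cat(X\times Y)$ via based path fibrations) matches the paper exactly. The gap is in what you call the technical heart. Merging the open covers of the four blocks ``by an Ostrand-type colimit/overlap argument'' is not a mechanism: the open sets and motion planners on, say, $X\times X$ and $X\times Y$ are constructed independently, and there is no reason they can be made compatible near the seam $X\times\{*\}$; moreover the fact you invoke to explain the role of the hypothesis $N\ge d+1$ (that a category cover of a $d$-complex needs at most $d+1$ sets) does not bear on this patching problem at all.

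The paper's actual mechanism, which your outline is missing, is to abandon open covers and work with a single Hurewicz fibration: the $(k{+}1)$-fold fiberwise join $\Delta^k_{X\vee Y}$ of the free path fibration, with $k=N$, whose sections over $(X\vee Y)\times(X\vee Y)$ detect $\TC(X\vee Y)\le k$. Its fiber is the join $*^{k+1}\Omega(X\vee Y)$, which is $(k-1)$-connected; the hypothesis $k\ge d+1$ is used precisely to make this fiber $d$-connected. One then produces honest sections of $\Delta^k_{X\vee Y}$ over each of the four closed blocks (via the retractions $X\vee Y\to X$, $X\vee Y\to Y$ for the diagonal blocks, and via a fibration map from the Ganea fibration $G^k_{X\times Y}$ concatenating paths through the basepoint for the mixed ones), and pastes them: the relevant intersections --- the wedge point inside $A=(X\times X)\cup(Y\times Y)$ and inside $B=(X\times Y)\cup(Y\times X)$, and then $A\cap B=X\vee X\vee Y\vee Y$ --- are at most $d$-dimensional, so two sections over such a subcomplex are fiberwise homotopic by obstruction theory (the obstructions lie in $H^{j+1}(C\times I,C\times\partial I;\pi_j(F))$, which vanish because $F$ is $d$-connected and $\dim C\le d$), and the homotopy lifting property for NDR pairs lets one adjust one section to agree with the other. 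Without this fiberwise-join and connectivity-of-the-fiber step, the inequality $N\ge d+1$ never enters your argument in a load-bearing way, and the seam compatibility you correctly flag as the main obstacle remains unresolved.
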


We note that in many cases  this is an improvement of the upper bound from~\cite{Dr1}
\[
   \TC(X\vee Y)\le\TC^M X+ \TC^M Y
\]
where $\TC^M$ is the monoidal topological complexity (see~\cite{I-S}). We recall that $\TC X\le \TC^M\le \TC X+1$.
In view of the fact that $\cat (X\times Y)\le \cat X+\cat Y$, Theorem~\ref{th:3} implies the inequality conjectured in~\cite{Fa2}, ~\cite[Remark 3.7]{Dr1}
\[
 \TC(X\vee Y)\le \max\{\TC X, \TC Y, \cat X+\cat Y\}
\]
 under the hypothesis of Theorem~\ref{th:3}. 

We extend Theorem~\ref{th:3} in two directions.
First observation is that the condition of  Theorem~\ref{th:3} can be relaxed in case of $r$-connected complexes.
\begin{theorem}\label{th:4}  Let  $d=\max\{dim X,\dim Y\}$ for $r$-connected CW-complexes $X$ and $Y$. Suppose
that  $\max\{\TC X,\TC Y,\cat(X\times Y\}\ge \frac{d+1}{r+1}$. Then
\[
\TC(X\vee Y)=   \max\{\TC X, \TC Y, \cat (X\times Y)\}.
\]
\end{theorem}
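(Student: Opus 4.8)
The lower bound $\max\{\TC X,\TC Y,\cat(X\times Y)\}\le \TC(X\vee Y)$ is already available from \cite{Dr1} (and is quoted at the start of this section), so the plan is to establish only the reverse inequality
\[
\TC(X\vee Y)\le c,\qquad c:=\max\{\TC X,\TC Y,\cat(X\times Y)\}.
\]
Throughout I would use the description of $\TC$ as the sectional category of the endpoint path fibration $\pi\colon P(X\vee Y)\to (X\vee Y)\times(X\vee Y)$, whose fiber is $\Omega(X\vee Y)$, and I would aim to produce an explicit cover of $(X\vee Y)\times(X\vee Y)$ by $c+1$ open sets carrying motion planning algorithms.

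First I would record the decomposition of the base into the four quadrants $X\times X$, $Y\times Y$, $X\times Y$ and $Y\times X$, which pairwise meet only along the axis complex $((X\vee Y)\times\{x_0\})\cup(\{x_0\}\times(X\vee Y))$, a subcomplex of dimension $\le d$ that is $r$-connected (it is a wedge of two copies of the $r$-connected complex $X\vee Y$). Using the retraction $X\vee Y\to X$ collapsing $Y$, the restriction $\pi|_{X\times X}$ has sectional category $\TC X$, and likewise $\pi|_{Y\times Y}$ has sectional category $\TC Y$. The essential local computation is for a cross quadrant: every path from a point of $X$ to a point of $Y$ must pass through $x_0$, so concatenation exhibits $\pi|_{X\times Y}$ as fiberwise homotopy equivalent to the product of the based path fibrations $P_{x_0}X\to X$ and $P_{x_0}Y\to Y$, i.e.\ to the based path fibration of $X\times Y$; hence $\mathrm{secat}(\pi|_{X\times Y})=\cat(X\times Y)$, and similarly for $Y\times X$. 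Thus over each quadrant the local sectional category is $\le c$.

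The crux is to merge the four quadrant covers into a single cover by $c+1$ sets rather than the sum that naive subadditivity produces. I would write $(X\vee Y)^2=A\cup B$ with $A=(X\times X)\vee(Y\times Y)$ and $B=(X\times Y)\vee(Y\times X)$, wedged at $(x_0,x_0)$, so that $A\cap B$ is exactly the $\le d$-dimensional, $r$-connected axis complex above. One then extends a motion planner given on $A$ across $B$ while reusing the same indices, the obstruction being a relative section-extension problem over the pair $(B,A\cap B)$. Since $X$ and $Y$ are $r$-connected, the fiber $\Omega(X\vee Y)$ is $(r-1)$-connected, and the number of stages needed to carry out this extension over the axis region is governed by a category-type invariant of an $r$-connected complex of dimension $\le d$, which by the classical dimension--connectivity estimate (Ganea--Schwarz, cf.\ \cite{CLOT}) is at most $\frac{d+1}{r+1}\le c$. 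This is precisely where the hypothesis enters, and it specializes to the condition $c\ge d+1$ of Theorem~\ref{th:3} when $r=0$.

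I expect the main obstacle to be this merging step: organizing the sliding of the surplus open sets off the low-dimensional axis region so that each merged set still carries an honest motion planning algorithm, and bookkeeping the dimension--connectivity estimate so that exactly the factor $r+1$ (the connectivity of $X$ and $Y$ plus one) appears rather than $r$. The quadrant identifications and the lower bound are routine; the entire weight of Theorem~\ref{th:4} sits in verifying that the construction underlying Theorem~\ref{th:3} goes through verbatim with the dimension threshold $d+1$ replaced everywhere by the connectivity-improved threshold $\frac{d+1}{r+1}$.
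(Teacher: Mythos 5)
Your skeleton matches the paper's: the lower bound is quoted from \cite{Dr1}, the square $(X\vee Y)^2$ is split into $A=(X\times X)\cup(Y\times Y)$ and $B=(X\times Y)\cup(Y\times X)$ meeting along the $d$-dimensional axis complex, sections are produced over the diagonal quadrants via the retractions and over the cross quadrants via concatenation of based paths. One minor error before the main issue: $\pi|_{X\times Y}$ is \emph{not} fiberwise homotopy equivalent to the based path fibration of $X\times Y$ --- the fiber of the former is homotopy equivalent to $\Omega(X\vee Y)$ while the fiber of the latter is $\Omega X\times\Omega Y$, and already for $X=Y=S^1$ these are the free group $F_2$ versus $\Z^2$. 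What is true, and all you need, is that concatenation gives a fiberwise map \emph{from} the (fiberwise join of the) based path fibration of $X\times Y$ \emph{into} $\Delta^k_{X\vee Y}|_{X\times Y}$, which pushes a section forward and yields $\mathrm{secat}\bigl(\pi|_{X\times Y}\bigr)\le\cat(X\times Y)$; the claimed equality is neither true nor needed.

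The central gap is the merging step, which you yourself say carries ``the entire weight'' of the theorem: it is not carried out, and the mechanism you gesture at is the wrong one. You propose to control the extension over the axis region by ``a category-type invariant of an $r$-connected complex of dimension $\le d$,'' i.e.\ by $\cat$ of $A\cap B$ via the estimate $\cat Z\le \dim Z/(r+1)$. But the gluing is not governed by the category of $A\cap B$; it is governed by the connectivity of the \emph{fiber} of the fibration $\Delta^k_{X\vee Y}$ whose sections over $A$ and over $B$ one pastes. That fiber is the join $*^{k+1}\Omega(X\vee Y)\simeq\Sigma^k\bigl(\Omega(X\vee Y)\bigr)^{\wedge(k+1)}$, which for $r$-connected $X,Y$ is $\bigl(k(r+1)+r-1\bigr)$-connected; the hypothesis $k\ge(d+1)/(r+1)$ gives $k(r+1)+r-1\ge d$, so the fiber is $d$-connected, and the obstructions to a fiberwise homotopy between the two sections over the $d$-dimensional complex $A\cap B$ (living in $H^{j+1}(A\cap B\times I,\,A\cap B\times\partial I;\pi_j(F))$) all vanish --- this is Lemma~\ref{paste}. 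The factor $r+1$ thus arises because each of the $k+1$ join factors contributes $r$ to the connectivity and the join contributes $k$ suspensions; it has nothing to do with a category estimate for the axis complex. Moreover, your insistence on merging \emph{open covers} by $c+1$ sets is precisely the combinatorial difficulty that passing to sections of the fiberwise join $\Delta^k$ is designed to bypass; without the connectivity computation for $*^{k+1}\Omega(X\vee Y)$ and the pasting lemma, the proof is missing its key step.
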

We note that the condition $\TC X\ge (\dim X+1)/(r+1)$ for $r$-connected  complexes has been seen before in the TC theory. Thus,
the first author showed~\cite{Dr1} that the topological complexity $\TC X$ coincides with the monoidal topological complexity $\TC^M X$ 
for $r$-connected $X$ under the assumption that $ \TC X\ge (\dim X+1)/(r+1)$. 

Then we extend the upper bound of Theorem~\ref{th:3} from the wedge of two spaces to the union.
\begin{theorem}\label{th:5}
Let  $d=\max\{dim (X\times C),\dim (Y\times C)\}$ for connected CW-complexes $X$ and $Y$ where $C=X\cap Y$. Suppose
that $\max\{\TC X,\TC Y,\cat(X\times Y\}\ge d+1$. Then
\[
\TC(X\cup Y)\le   \max\{\TC X, \TC Y, \cat (X\times Y)\}.
\]
\end{theorem}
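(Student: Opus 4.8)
The plan is to read $\TC(X\cup Y)$ as the sectional category of the path fibration $p_Z\co Z^{[0,1]}\to Z\times Z$ for $Z=X\cup Y$, and to reduce the construction of a motion planner to four independent problems on the four sectors of the product. Writing $N=\max\{\TC X,\TC Y,\cat(X\times Y)\}$, I would first decompose $Z\times Z$ into the closed sectors $X\times X$, $X\times Y$, $Y\times X$ and $Y\times Y$. The reason for phrasing everything through $C=X\cap Y$ is that any two distinct sectors meet only in a set of the form $A\times C$ or $C\times A$ with $A\in\{X,Y,C\}$; hence the whole \emph{interface} where two or more sectors overlap is contained in a subcomplex of dimension at most $d=\max\{\dim(X\times C),\dim(Y\times C)\}$. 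This is exactly why the dimension hypothesis is stated in terms of products with $C$.

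Next I would produce, over each sector, a covering by at most $N+1$ open sets carrying motion planners \emph{for $Z$}. Over $X\times X$ one uses an optimal motion planner for $X$; its paths lie in $X\subseteq Z$, hence are motions in $Z$, so $\TC X+1\le N+1$ open sets suffice, and symmetrically over $Y\times Y$ using $\TC Y$. Over the off-diagonal sector $X\times Y$ I would take a categorical covering $U_0,\dots,U_{\cat(X\times Y)}$ of $X\times Y$: on each $U_i$ a nullhomotopy $H=(H^X,H^Y)$ of the inclusion $U_i\hookrightarrow X\times Y$ to a point $(x_i,y_i)$ gives, for each $(x,y)\in U_i$, a path from $x$ to $x_i$ in $X$ and a path from $y$ to $y_i$ in $Y$; concatenating these with a fixed path $\gamma_i$ from $x_i$ to $y_i$ in the connected space $Z$ produces a section of $p_Z$ over $U_i$. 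Thus $\cat(X\times Y)+1\le N+1$ open sets cover $X\times Y$ with motion planners, and symmetrically for $Y\times X$. Since these sets are a priori open only inside the sectors, a routine thickening to open neighbourhoods in $Z\times Z$, extending the relevant homotopies over a neighbourhood, is also needed.

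The main obstacle is to amalgamate these four coverings, each with at most $N+1$ members, into a single covering of $Z\times Z$ by $N+1$ open sets that still carry motion planners; a naive union would give up to $4(N+1)$ sets. Here I would pad each sector covering to exactly $N+1$ indexed sets and attempt to merge the sets of equal index from different sectors into one open set $W_i$. The difficulty is that the four constituent pieces of $W_i$ overlap on the interface and their sections disagree there, so to get a single section on $W_i$ the pieces must be made disjoint after shrinking without uncovering $Z\times Z$. This is precisely where the hypothesis $N\ge d+1$ enters: all overlaps of distinct sectors lie in the interface of dimension at most $d$, and we have at least $d+2$ colours available, so a general position and obstruction argument carried out over the skeleta of the $\le d$-dimensional interface lets one reassign colours until the pieces of each $W_i$ are disjoint while the $W_i$ still cover. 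I expect this merging step to be the technical heart of the proof; it runs parallel to the proof of Theorem~\ref{th:3}, with the single wedge point replaced by the subcomplex $C$, and the bookkeeping of the neighbourhoods of $C\times C$, $X\times C$ and $C\times Y$ should be the most delicate part.
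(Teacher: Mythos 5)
Your set-up is sound as far as it goes: the decomposition of $(X\cup Y)^2$ into the four sectors, the observation that distinct sectors meet only in the interface of dimension $\le d$, and the constructions of planners over $X\times X$, $Y\times Y$ (from $\TC X$, $\TC Y$) and over $X\times Y$, $Y\times X$ (from a categorical cover of $X\times Y$ and connectedness of $X\cup Y$) all match what the paper does, the last of these being exactly the paper's embedding of the Ganea fibration $G^k_{X\times Y}$ into $\Delta^k_{X\cup Y}$. But the amalgamation step, which you yourself identify as the technical heart, is where the proposal has a genuine gap. The proposed mechanism --- pad each sector's cover to $N+1$ indexed sets, then ``reassign colours'' and shrink by a general position/obstruction argument over the $\le d$-dimensional interface so that same-indexed pieces from different sectors become disjoint while the union still covers --- is not a known or worked-out argument, and the hypothesis $N\ge d+1$ does not obviously feed into it: having $d+2$ colours available is not the same as being able to disjointify a $4(N+1)$-element cover into $N+1$ classes near a $d$-dimensional subcomplex, and no obstruction group is identified whose vanishing would accomplish this. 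Naive combination of open covers in this style is exactly what produces additive (not max-type) upper bounds for sectional category of a union.

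The paper avoids this entirely by never working with open covers at the amalgamation stage. It uses the Schwarz criterion: $\TC(X\cup Y)\le k$ iff the $(k+1)$-fold fiberwise join $\Delta^k_{X\cup Y}$ of the path fibration admits a \emph{single global section} over $(X\cup Y)^2$. The hypothesis $k\ge d+1$ is then used for a completely different purpose than in your proposal: the fiber of $\Delta^k_{X\cup Y}$ is $(k-1)$-connected, hence $d$-connected, so the sections over the four sectors --- which disagree only on the interface pieces $C\times C$, $X\times C$, $C\times Y$, etc., all of dimension $\le d$ --- can be made to agree there by a fiberwise homotopy (Lemma~\ref{paste}). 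One first pastes the sections over $X\times X$ and $Y\times Y$ along $C\times C$ to get a section over $A$, similarly over $B=(X\times Y)\cup(Y\times X)$, and then pastes along the $d$-dimensional set $A\cap B$. To repair your proof you would need either to supply a genuine disjointification lemma (which I do not believe holds in the form you describe) or to switch to the fiberwise-join formulation, at which point you would be reproducing the paper's argument.
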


\section{Preliminaries}

\subsection{Fiberwise join}
The \emph{join} $X_0*X_1*\cdots*X_n$ of topological spaces consists of formal linear combinations $t_0x_0+\cdots +t_nx_n$ of points $x_i\in X_i$ with non-negative coefficients $t_i$ that satisfy the condition $\sum t_i=1$. The \emph{fiberwise join} of the total spaces $X_0, ..., X_n$ of fibrations $f_i\co X_i\to Y$  is defined to be the topological space \[
    X_0*_YX_1*_Y\cdots *_YX_n=\{\ t_0x_0+\cdots +t_nx_n\in X_0*\cdots *X_n\ |\ f_0(x_0)=\cdots =f_n(x_n)\ \}.
\]
It is fibered over $Y$ by means of the map, called the \emph{fiberwise join of fibrations}, 
defined by taking a point $t_0x_0+\cdots +t_nx_n$ to $f_i(x_i)$ for any $i$. As the name `fiberwise join' suggests, the fiber of the fiberwise join of fibrations is given by the join of fibers of fibrations. 

In all our applications, the spaces $X_i$ coincide with a space $X$ and all fibrations $f_i$ coincide with a fibration $f$ over a space $Y$. In this case the fiberwise join and the fiberwise join of fibrations are denoted by  $*^{n+1}_YX$ and  $*_Y^{n+1}f$ respectively. 

Let $G^0_X$ be the space of paths $s\co [0,1]\to X$ issued from the base point of $X$. It is fibered over $X$ by means of the map $s\mapsto s(1)$. By definition, the \emph{$n$-th Ganea space} of $X$ is the fiberwise join $G^n_X=*^{n+1}G^0_X$.  By the Schwarz theorem~\cite{Sch},  $\cat(X)\le n$ if and only if the $n$-th Ganea fibration $p^n_X:G^n_X\to X$ admits a section.

Similarly, let $PX$ denote the space of paths $s\co [0, 1]\to X$. It is fibered over $X\times X$ by means of the map $s\mapsto (s(0), s(1))$. Let $\Delta^n_X$ denote the fiberwise join $*^{n+1}PX$. Then, by the Schwarz theorem, $\TC X\le n$ if and only if $\Delta^n_X$ admits a section. 

\subsection{The Berstein-Schwarz class.} Let $\pi$ be a discrete group and  $A$ be a $\pi$-module. By $H^*(\pi,A)$ we denote the cohomology of the group $\pi$ with coefficients in $A$ and by
$H^*(X;A)$ we denote the cohomology of a space $X$ with the twisted coefficients defined by $A$. Here
we assume  $\pi_1(X)=\pi$.

The Berstein-Schwarz class of a group $\pi$ is a certain cohomology class $\beta_{\pi}\in H^1(\pi,I(\pi))$ where $I(\pi)$ is the augmentation ideal of the group ring $\Z\pi$~\cite{Ber},\cite{DR},\cite{Sch}. 
It is defined as the first obstruction to a section over $B\pi=K(\pi,1)$ for the universal covering $p:E\pi\to B\pi$. 
Equivalently $\beta_{\pi}$ can be defined as follows. Let
$$
0\to I(\pi)\to\Z\pi\stackrel{\epsilon}\to\Z\to 0
$$ 
be a short exact sequence of coefficients where $\epsilon$ is the augmentation homomorphism.
Then $\beta_{\pi}=\delta(1)$ equals the image of the generator $1\in H^0(\pi;\Z)=\Z$ under the connecting homomorphism 
$\delta:H^0(\pi;\Z)\to H^1(\pi;I(\pi))$ in the coefficient long exact sequence.

\begin{theorem}[Universality]~\cite[Proposition 2.2]{DR},\cite{Sch}
For any $\pi$-module $L$ and any cohomology class $\alpha\in H^k(\pi,L)$ there is a homomorphism of $\pi$-modules $I(\pi)^k\to L$ such that the induced homomorphism for cohomology takes $(\beta_{\pi})^k\in H^k(\pi;I(\pi)^k)$ to $\alpha$  where $I(\pi)^k=I(\pi)\otimes\dots\otimes I(\pi)$ and $(\beta_{\pi})^k=\beta_{\pi}\smile\dots\smile\beta_{\pi}$.
\end{theorem}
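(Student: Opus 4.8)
The plan is to realize both sides concretely through a single free resolution of the trivial module $\Z$ over $\Z\pi$. Write $I=I(\pi)$. As a subgroup of the free abelian group $\Z\pi$, $I$ is free abelian, and hence so is each tensor power $I^{\otimes j}$. Consequently, equipped with the diagonal action, $P_j:=\Z\pi\otimes_{\Z}I^{\otimes j}$ is a \emph{free} $\Z\pi$-module (untwist by $g\otimes m\mapsto g\otimes g^{-1}m$). Writing $\iota\co I\hookrightarrow\Z\pi$ for the inclusion and $\epsilon$ for the augmentation, define $\partial_j\co P_j\to P_{j-1}$ by $\partial_j=(\iota\otimes\id)\circ(\epsilon\otimes\id)$. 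One checks directly that
\[
\cdots\to P_2\xrightarrow{\partial_2}P_1\xrightarrow{\partial_1}P_0=\Z\pi\xrightarrow{\epsilon}\Z\to 0
\]
is exact with $j$-th syzygy $\ker\partial_{j-1}=\operatorname{im}\partial_j\cong I^{\otimes j}$; exactness is immediate since $\epsilon\otimes\id$ and $\iota\otimes\id$ are $\Z$-split and $I^{\otimes j}$ is $\Z$-flat. I will compute all group cohomology from this resolution.

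The heart of the matter is to identify $(\beta_\pi)^k$ with the class of the canonical syzygy projection $q_k\co P_k\to I^{\otimes k}$, $q_k=\epsilon\otimes\id$; this is a cocycle because it kills $\operatorname{im}\partial_{k+1}=I\otimes I^{\otimes k}$. For $k=1$ this is a direct computation: lifting the augmentation cocycle along $\id\co\Z\pi\to\Z\pi$ shows that the connecting homomorphism $\delta$ of $0\to I\to\Z\pi\to\Z\to 0$ carries $1$ to $[\partial_1]$, which corestricted to $I$ is $q_1$, so $\beta_\pi=[q_1]$. For the inductive step I would invoke the standard identity between cup product and connecting homomorphisms: tensoring $0\to I\to\Z\pi\to\Z\to 0$ with $I^{\otimes(k-1)}$ yields
\[
0\to I^{\otimes k}\to P_{k-1}\xrightarrow{q_{k-1}}I^{\otimes(k-1)}\to 0,
\]
whose connecting homomorphism $\delta_k\co H^{k-1}(\pi;I^{\otimes(k-1)})\to H^k(\pi;I^{\otimes k})$ agrees, up to sign, with cup product by $\beta_\pi$. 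Computing $\delta_k[q_{k-1}]$ with the obvious lift $\id_{P_{k-1}}$ returns $[\partial_k]=[q_k]$, so by induction $(\beta_\pi)^k=\pm[q_k]$.

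Granting this, the theorem is formal. Computed from $P_\bullet$, a $k$-cocycle with coefficients in $L$ is a $\pi$-homomorphism $a\co P_k\to L$ with $a\circ\partial_{k+1}=0$; since $\ker q_k=\operatorname{im}\partial_{k+1}$, any such $a$ factors uniquely as $a=\phi\circ q_k$ for a $\pi$-homomorphism $\phi\co I^{\otimes k}\to L$. Choosing $a$ to represent the given class $\alpha$ and taking the corresponding $\phi$, the induced map satisfies $\phi_*[q_k]=[\phi\circ q_k]=[a]=\alpha$, hence $\phi_*\big((\beta_\pi)^k\big)=\alpha$ (replace $\phi$ by $-\phi$ should the sign above be negative).

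I expect the genuine obstacle to be the middle paragraph, the identification $(\beta_\pi)^k=\pm[q_k]$. Because $P_\bullet$ is not the bar resolution, a head-on cup-product computation would need a diagonal approximation; the route through the cup-product/connecting-homomorphism identity sidesteps this, but requires care that tensoring the defining extension by $I^{\otimes(k-1)}$ stays exact (precisely why $\Z$-freeness of $I$ is essential) and a harmless tracking of signs. Since $P_\bullet$ is the resolution underlying the definition of $\beta_\pi$ in \cite{DR}, these ingredients should already be on hand there.
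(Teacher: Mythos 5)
The paper does not prove this statement; it is quoted with references to \cite{DR} and \cite{Sch}, so there is no in-paper argument to compare against. Your proof is correct and is essentially the standard argument behind those references: the modules $\Z\pi\otimes_{\Z}I(\pi)^{\otimes j}$ with diagonal action are indeed $\Z\pi$-free (untwisting works because $I(\pi)^{\otimes j}$ is $\Z$-free), the complex is exact because $0\to I(\pi)\to\Z\pi\to\Z\to 0$ is $\Z$-split, the computation $\beta_\pi=[q_1]$ is right, the inductive identification $[q_k]=\pm(\beta_\pi)^k$ via the standard compatibility of connecting homomorphisms with cup products (here the relevant instance is $\delta_M(x)=\delta(1)\smile x$, so in fact no sign intervenes) is the intended route, and the final factorization of an arbitrary cocycle through $q_k$ gives the required $\pi$-homomorphism $I(\pi)^{\otimes k}\to L$. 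The one step you flag as delicate is genuinely the crux, but it is a resolution-independent standard fact, so the argument is complete as outlined.
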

\begin{corollary}[\cite{Sch}]\label{ganea-obstr}
The class  $(\beta_{\pi})^{n}$ is the primary obstruction to a section of $p_{B\pi}^{n-1}:G^n_{B\pi}\to B\pi$.
\end{corollary}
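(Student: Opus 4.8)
The plan is to identify the first nonvanishing homotopy group of the fiber of the Ganea fibration, read off the dimension and the coefficient module in which the primary obstruction lives, and then recognize that obstruction as the $n$-fold cup power of $\beta_{\pi}$ by exploiting the fact that the Ganea fibration is an iterated fiberwise join of the universal covering. Throughout I read the statement with the fibration $p^{n-1}_{B\pi}\co G^{n-1}_{B\pi}\to B\pi$, in accordance with the convention $p^m_X\co G^m_X\to X$ fixed above.

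First I would compute the fiber. Since $G^{n-1}_{B\pi}=*^{n}_{B\pi}G^0_{B\pi}$ and the fiber of $G^0_{B\pi}\to B\pi$ is $\Omega B\pi\simeq\pi$ (a discrete set), the fiber of $p^{n-1}_{B\pi}$ is the join $\pi^{*n}$. This space is $(n-2)$-connected, so by the Hurewicz theorem $\pi_{n-1}(\pi^{*n})\cong\tilde H_{n-1}(\pi^{*n})$, and by the iterated Künneth formula for the homology of a join one has $\tilde H_{n-1}(\pi^{*n})\cong\tilde H_0(\pi)^{\otimes n}\cong I(\pi)^{n}$ as $\Z\pi$-modules. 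Consequently the primary obstruction to a section of $p^{n-1}_{B\pi}$ is a well-defined class $o_{n-1}\in H^{n}(B\pi;I(\pi)^{n})$, lying in exactly the group that contains $(\beta_{\pi})^{n}$.

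The base case $n=1$ is the defining property of the Berstein-Schwarz class: the fiber of $p^0_{B\pi}$ is $\pi$, and $\beta_{\pi}\in H^1(B\pi;I(\pi))$ is by definition the first obstruction to a section of the universal covering, that is, of $p^0_{B\pi}$. To pass from this to the general case I would establish the multiplicativity of primary obstructions under fiberwise join: if $f$ and $g$ are fibrations over a common base whose fibers $F$ and $F'$ are $(a-1)$- and $(b-1)$-connected, then $F*F'$ is $(a+b)$-connected, $\tilde H_{a+b+1}(F*F')\cong\tilde H_a(F)\otimes\tilde H_b(F')$, and the primary obstruction of the fiberwise join satisfies $o(f*g)=o(f)\smile o(g)$, the coefficient pairing being this join isomorphism. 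Iterating this identity for the $n$-fold fiberwise join $p^{n-1}_{B\pi}=*^{n}_{B\pi}p^0_{B\pi}$, starting from $o(p^0_{B\pi})=\beta_{\pi}$, yields $o_{n-1}=(\beta_{\pi})^{n}$, which is the assertion.

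The main obstacle is the multiplicativity step. Proving $o(f*g)=o(f)\smile o(g)$ on the level of cohomology requires a cocycle computation: one fixes a CW-model for the base, represents $o(f)$ and $o(g)$ by obstruction cocycles valued in the bottom reduced homology of the respective fibers, and verifies cell by cell that the obstruction cocycle for the fiberwise join is precisely the cup product of these cocycles, the coefficients being multiplied through the join pairing above. This is where the relative Hurewicz theorem and the naturality of the join construction under the diagonal of the base enter; once the cocycle identity is checked, passing to cohomology classes and iterating is formal. One could alternatively bypass the explicit bookkeeping by appealing to Schwarz's theory relating the primary obstruction, the sectional category, and cup-length in $H^*(B\pi;I(\pi)^{\bullet})$, but the fiberwise-join product formula is the conceptually cleanest route and isolates the single genuinely nontrivial computation.
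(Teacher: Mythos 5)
The paper offers no proof of this corollary---it is a direct citation of Schwarz---and your outline is precisely the standard argument from that source: identify the fiber of $p^{n-1}_{B\pi}$ (after correcting the paper's index clash, as you do) with the join $\pi^{*n}$, hence the coefficients with $I(\pi)^{\otimes n}$, take the $n=1$ case as the definition of $\beta_{\pi}$, and conclude by Schwarz's multiplicativity of primary obstructions under fiberwise join. The only soft spots are that this multiplicativity lemma, which is the entire content, is sketched rather than proved, and that for $n=2$ the fiber $\pi*\pi$ is a graph with nonabelian $\pi_1$, so the Hurewicz step should be replaced by taking the primary obstruction with coefficients in $\tilde H_{n-1}$ of the fiber (as Schwarz does) rather than in $\pi_{n-1}$.
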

\begin{corollary}\label{cd} 
For any group $\pi$ its cohomological dimension can be expressed as follows:
$$
cd(\pi)=\max\{n\mid (\beta_{\pi})^n\ne 0\}.
$$
\end{corollary}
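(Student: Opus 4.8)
The plan is to establish the equality by proving the two inequalities
$\max\{n \mid (\beta_{\pi})^n \ne 0\} \le cd(\pi)$ and
$cd(\pi) \le \max\{n \mid (\beta_{\pi})^n \ne 0\}$ separately, with the Universality theorem serving as the essential input for the second.

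For the first inequality I would argue directly from the definition of cohomological dimension. Set $c = cd(\pi)$. By definition $H^n(\pi; M) = 0$ for every $\pi$-module $M$ and every $n > c$. Since $(\beta_{\pi})^n$ lives in $H^n(\pi; I(\pi)^n)$, it must vanish whenever $n > c$. Hence every $n$ with $(\beta_{\pi})^n \ne 0$ satisfies $n \le c$, which gives $\max\{n \mid (\beta_{\pi})^n \ne 0\} \le cd(\pi)$.

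For the reverse inequality the point is to exhibit a nonzero power $(\beta_{\pi})^c$. By the definition of $cd(\pi) = c$ there exists a $\pi$-module $L$ together with a nonzero class $\alpha \in H^c(\pi; L)$. Applying the Universality theorem to $\alpha$ produces a homomorphism of $\pi$-modules $I(\pi)^c \to L$ whose induced map on cohomology sends $(\beta_{\pi})^c$ to $\alpha$. Since $\alpha \ne 0$, the class $(\beta_{\pi})^c$ cannot be zero, so $c \le \max\{n \mid (\beta_{\pi})^n \ne 0\}$. Combining the two inequalities yields the claimed equality.

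I expect the only delicate point to be the correct invocation of the Universality theorem: one must ensure that the homomorphism it supplies is genuinely a map of $\pi$-modules inducing the stated transformation on twisted cohomology, so that the nonvanishing of $\alpha$ propagates back to force the nonvanishing of $(\beta_{\pi})^c$. Everything else is a formal consequence of the definition of cohomological dimension, so the proof is short once this tool is in place.
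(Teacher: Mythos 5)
Your proof is correct and is exactly the derivation the paper intends: the paper states this as an unproved corollary of the Universality theorem, and your two inequalities (vanishing of $H^n(\pi;I(\pi)^n)$ above $cd(\pi)$ for one direction, Universality applied to a nonzero class in top degree for the other) are the standard argument behind it. The only unaddressed edge case is $cd(\pi)=\infty$, where one should note that nonvanishing of $H^n(\pi;L)$ for arbitrarily large $n$ forces $(\beta_\pi)^n\ne 0$ for arbitrarily large $n$ by the same Universality argument; this is immaterial for the paper's applications.
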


\subsection{Pasting sections}
We recall that a map $p:E\to B$  satisfies the {\em Homotopy Lifting Property for a pair} $(X,A)$ if for any
homotopy $H:X\times I\to B$ with a lift $H':A\times I\to E$ of the restriction $H|_{A\times I}$
and a lift $H_0$ of $H|_{X\times0}$ which agrees with $H'$, there is a lift $\bar H:X\times I\to E$
of $H$ which agrees with $H_0$ and $H'$. We recall that a pair of spaces $(X,A)$ is called an NDR pair if $A$ is a deformation retract of a neighborhood in $X$. In particular, every CW complex pair is an NDR pair.  It is well-known~\cite{TD}, Corollary 5.5.3 that any Hurewicz fibration $p:E\to B$  satisfies the Homotopy Lifting Property for NDR pairs $(X,A)$.
\begin{lemma}\label{paste}
Let $p:E\to B$ be a Hurewicz fibration over a CW complex $B$ with $(n-1)$-connected fiber $F$ where $B=X\cup Y$ is presented as the union of subcomplexes with $n$-dimensional intersection  $C=X\cap Y$, $\dim C =n$, such that $H^n(C;\mathcal F)=0$
for any local coefficients. Suppose that there are sections of $p$ over $X$ and $Y$. Then $p$ admits a section $s:B\to E$. 
\end{lemma}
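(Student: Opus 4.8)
The plan is to reduce everything to making the two given sections agree on the intersection $C$, and then to glue them. Write $s_X\co X\to E$ and $s_Y\co Y\to E$ for the given sections. Restricting to $C$ produces two sections $s_X|_C$ and $s_Y|_C$ of the pullback fibration over $C$. The central step is to show that these two sections are homotopic through sections, i.e. that there is a fiberwise homotopy $H\co C\times I\to E$ with $p\circ H=\mathrm{pr}_C$, $H(\cdot,0)=s_X|_C$ and $H(\cdot,1)=s_Y|_C$.

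Such a homotopy is precisely a section of the pullback fibration over $C\times I$ that extends the prescribed section on $C\times\partial I$, so I would construct it cell by cell over the pair $(C\times I,\,C\times\partial I)$ by obstruction theory. The obstruction to extending the section across a cell $e^p\times I$ lies in $\pi_p(F)$, so the obstruction cochains sit in $C^{p+1}(C\times I,\,C\times\partial I;\pi_p(F))$; via the suspension isomorphism $H^{p+1}(C\times I,\,C\times\partial I;M)\cong H^{p}(C;M)$ for local coefficients $M$ (coming from the long exact sequence of the pair, where the restriction $H^{p}(C\times I)\to H^{p}(C\times\partial I)$ is the split injective diagonal), the obstruction classes live in $H^{p}(C;\pi_p(F))$. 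Since $F$ is $(n-1)$-connected we have $\pi_p(F)=0$ for $p\le n-1$, so all obstructions below dimension $n$ vanish and the primary obstruction reduces to the single class in $H^{n}(C;\pi_n(F))$. The hypothesis $H^{n}(C;\mathcal F)=0$ for all local coefficients (applied to $\mathcal F=\pi_n(F)$) annihilates this class, and because $\dim C=n$ there are no higher obstructions. Hence $H$ exists.

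With $H$ in hand, I would promote it from $C$ to $X$ using the Homotopy Lifting Property. Apply the HLP for the NDR pair $(X,C)$ to the constant homotopy $\bar H\co X\times I\to B$, $\bar H(x,t)=x$, taking $H$ as the partial lift over $C\times I$ and $s_X$ as the lift over $X\times\{0\}$; these agree on $C\times\{0\}$ since $s_X|_C=H(\cdot,0)$, so the hypotheses of the HLP are met. This yields a lift $\tilde H\co X\times I\to E$, and setting $s_X'=\tilde H(\cdot,1)$ gives a section over $X$ with $s_X'|_C=H(\cdot,1)=s_Y|_C$. As $X$ and $Y$ are closed subcomplexes of $B$ meeting exactly in $C$, and $s_X'$ agrees with $s_Y$ on $C$, the gluing lemma for closed sets furnishes a continuous section $s\co B\to E$.

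I expect the only real obstacle to be the bookkeeping in the obstruction argument of the second paragraph: identifying that the obstructions to a homotopy of sections over $C$ reduce via the suspension isomorphism to classes in $H^{p}(C;\pi_p(F))$, and verifying that the $(n-1)$-connectivity of $F$ together with $\dim C=n$ leaves only the single group $H^{n}(C;\pi_n(F))$ to be killed by the hypothesis. The remaining steps — invoking the HLP and applying the gluing lemma — are formal.
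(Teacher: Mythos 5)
Your proposal is correct and follows essentially the same route as the paper's proof: construct the fiberwise homotopy between $s_X|_C$ and $s_Y|_C$ by obstruction theory over the pair $(C\times I, C\times\partial I)$ (your suspension isomorphism $H^{p+1}(C\times I,C\times\partial I;M)\cong H^p(C;M)$ is exactly the paper's long-exact-sequence computation), then extend it over $X$ via the Homotopy Lifting Property for the NDR pair $(X,C)$ and glue.
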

\begin{proof} Let $s_X:X\to E$ and $s_Y:Y\to E$ be the sections.
First we show that there is a fiberwise homotopy between the restrictions $s_X$ and $s_Y$ to $C$.
We note that construction of such homotopy is a relative lifting problem
$$
\begin{CD}
C\times\partial I @>s_X\cup s_Y>> E\\
@VVV @Vp|VV\\
C\times I @>\pi>> C\\
\end{CD}
$$
for the projection map $\pi$.
Since the fiber is $(n-1)$-connected, a lift of $\pi$ exists on the $n$-skeleton $(C\times I)^{(n)}=(C^{(n-1)}\times I)\cup (C\times\partial I)$. The obstruction to extend it to the $(n+1)$-skeleton lives in the cohomology group $H^{n+1}(C\times I,C\times\partial I;\mathcal F)$ for local coefficients defined by
$\pi_n(F)$. The exact sequence of pair together with the acyclicity of $C$ in dimensions $n$ and $n+1$ imply that
$H^{n+1}(C\times I,C\times\partial I;\mathcal F)=0$. This proves the existence of a fiberwise homotopy
$H:C\times I\to E$ between the sections $s_X$ and $s_Y$ restricted to $C$.
In view of the Homotopy Lifting Property for NDR pairs the fiberwise homotopy $H:C\times I\to E$ of the restriction $s_X|_C$ can be extended to a fiberwise homotopy $\bar H:X\times I\to E$ of $s_X$. Then the section $s'_X:X\to E$ defined as $s'_X(x)=H(x,1)$ agrees with $s_Y$ on $C$. Hence the union $s'_X\cup s_Y$ defines a section $s:X\cup Y\to E$.  
\end{proof}

\section{Proof of Theorem~\ref{th:2}}

{\em Proof of Theorem~\ref{th:1}.} Corollary~\ref{cd} and the cup-length lower bound for the LS-category imply that $\cat G\ge cd(G)$.
The dimension upper bound for the LS-category  completes the proof for the groups with $cd(G)=gdim(G)$. Now suppose that $cd(G)=2$ and $\dim BG=3$.
Note that the Ganea-Schwarz fibration $G^2_{BG}\to BG$ has simply connected fiber. Thus, there is a section $s':BG^{(2)}\to G^2_{BG}$. The primary 
(and the only) obstruction to define a section $s:BG\to G^2_{BG}$ lives in the group $H^3(BG;\mathcal F)$ which is trivial in view of the equality $cd(G)=2$.\qed

\begin{proposition}\label{+one}
For all groups $cd(G\times H)\ge cd(G)+1$.
\end{proposition}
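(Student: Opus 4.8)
The plan is to reduce the inequality to the special case $H=\Z$ and then invoke monotonicity of cohomological dimension under passage to subgroups. First I would dispose of the degenerate cases: if either $G$ or $H$ contains torsion, then $G\times H$ contains torsion and $cd(G\times H)=\infty$, so the inequality is vacuous; and if $cd(G)=\infty$ it holds because $G$ is a retract of $G\times H$, whence $cd(G)\le cd(G\times H)$. Thus I may assume that $G$ and $H$ are torsion free with $cd(G)=m<\infty$ and (as is implicit in the statement) $H\ne 1$. Being torsion free and nontrivial, $H$ contains an element of infinite order, hence a subgroup $\langle h\rangle\cong\Z$. Consequently $G\times\Z\cong G\times\langle h\rangle$ is a subgroup of $G\times H$, and since cohomological dimension does not increase under passage to subgroups (a projective $\Z(G\times H)$-resolution of $\Z$ restricts to a projective $\Z(G\times\Z)$-resolution, because $\Z(G\times H)$ is free over $\Z(G\times\Z)$), it suffices to prove the base case
\[
  cd(G\times\Z)\ge cd(G)+1.
\]

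For the base case I would argue directly with cohomology. By definition of cohomological dimension there is a $\Z G$-module $M$ with $H^{m}(G;M)\ne 0$, while $H^{m+1}(G;M)=0$ because $m=cd(G)$. Identifying $B(G\times\Z)$ with $BG\times S^1$ and regarding $M$ as a local system on this product pulled back from $BG$ (with trivial monodromy along the $S^1$ factor), the K\"unneth theorem applies cleanly: the $S^1$ factor contributes the free, finitely generated, untwisted cohomology $H^*(S^1;\Z)=\Z\oplus\Z$ concentrated in degrees $0$ and $1$, so that
\[
  H^{n}(BG\times S^1;M)\cong H^{n}(BG;M)\oplus H^{n-1}(BG;M).
\]
Taking $n=m+1$ gives $H^{m+1}(G\times\Z;M)\cong H^{m+1}(G;M)\oplus H^{m}(G;M)\cong H^{m}(G;M)\ne 0$, and therefore $cd(G\times\Z)\ge m+1$.

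Combining the two steps yields $cd(G\times H)\ge cd(G\times\Z)\ge cd(G)+1$, as claimed. The only point that requires genuine care is the twisted K\"unneth isomorphism in the second paragraph; its validity rests precisely on the coefficient system being pulled back from $BG$ and on the $S^1$ factor carrying free, finite, untwisted cohomology, so that no $\mathrm{Tor}$ or $\lim^1$ corrections intervene. I note that one could instead stay inside the Berstein--Schwarz framework of Corollary~\ref{cd} and try to show that the cross product $(\beta_G)^{m}\times\beta_H$ is a nonzero class in $H^{m+1}(G\times H;I(G)^{\otimes m}\otimes I(H))$; there the obstacle migrates to a torsion question --- whether $(\beta_G)^{m}$ generates a free summand of $H^{m}(G;I(G)^{\otimes m})$ so as to survive tensoring with $\beta_H$ --- which is exactly what the geometric passage to the $\Z$-subgroup circumvents.
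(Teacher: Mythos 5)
Your proof is correct and follows essentially the same route as the paper: dispose of the infinite-dimensional cases, find a copy of $\Z$ inside the torsion-free group $H$, and use monotonicity of $cd$ under passage to subgroups to reduce to $cd(G\times\Z)\ge cd(G)+1$. The only difference is that you also supply a proof of that base case via a twisted K\"unneth argument (which is sound --- the coefficient system is pulled back from $BG$ and the $S^1$ factor has free finitely generated untwisted cohomology), whereas the paper simply asserts it as a standard fact from~\cite{Br}.
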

\begin{proof}
We may assume that the groups have finite cohomological dimension. In particular, $H$ is torsion free. Hence it contains a copy of integers.
Since the cohomological dimension of a subgroup
does not exceed the cohomological dimension of a group~\cite{Br}, it follows
that $$cd(G\times H)\ge cd(G\times\Z)\ge cd(G)+1.$$
\end{proof}

\

{\em Proof of Theorem 2.} 
Since both $G$ and $H$ are not counterexamples to the Eilenberg-Ganea conjecture, there are classifying spaces $BG$
and $BH$ with $\dim BG=cd(G)$ and $\dim BH=cd(H)$.

In view of  Theorem~\ref{th:1} and Proposition~\ref{+one} we obtain
$$\cat(BG\times BH)=\cat(G\times H)=cd(G\times H)\ge \max\{cd(G),cd(H)\}+1.$$ 
Thus, by the Eilenberg-Ganea theorem $$\cat(BG\times BH)\ge\max\{\dim(BG),\dim(BH)\}+1.$$
By Theorem~\ref{th:3} we obtain
$$
\TC(G\ast H)=\TC(BG\vee BH)=\max\{\TC G, \TC H,\cat(BG\times BH)\}=$$
$$\max\{\TC G, \TC H,\cat(B(G\times H))\}=\max\{\TC G, \TC H,cd(G\times H)\}.
$$
\qed

\begin{problem}\label{e-g}
Suppose that $G$ is a counterexample to the Eilenberg-ganea conjecture, i.e. $cd(G)=2$ and $gdim(G)=3$. Does it follow that $\TC(G)=4$?
\end{problem}

\section{Proof of Theorems~\ref{th:3},~\ref{th:4},~\ref{th:5}}

Let $Y$ be a subspace of $X$. Then the inclusion $i\co Y\to X$ gives rise to the map of fibrations $i_\Delta\co \Delta^k_Y\to \Delta^k_X$. In particular, if $\Delta^k_Y$ admits a section over $Y\times Y$, then $\Delta^k_X$ also admits a section over $Y\times Y\subset X\times X$. Suppose, furthermore, that there is a retraction $r\co X\to Y$. It gives rise to  the map of fibrations $r_\Delta\co \Delta^k_X\to \Delta^k_Y$. In particular, the existence of a section of $\Delta^k_X$ implies the existence of a section of the fibration $\Delta^k_Y$. 

Similarly, given two topological spaces $X$ and $Y$, for each $k\ge 0$, there is a fibration $G^k_{X\times Y}$, called the $k$-th Ganea fibration, over $X\times Y$ such that $\cat (X\times Y)\le k$ if and only if $G^k_{X\times Y}$ admits a section. A point in $G^k_{X\times Y}$ over $(x, y)\in X\times Y$ is a formal sum $\sum t_i(g_i, h_i)$ where $g_i$ and $h_i$ are paths from respectively $x$ and $y$ to the distinguished point in $X$ and $Y$ respectively.

Theorem~\ref{th:3} is a partial case of Theorem~\ref{th:4}. So we prove the latter.

\subsubsection*{The lower bound for $\TC (X\vee Y)$} In this subsection we give an alternative proof of the fact that the topological complexity $\TC(X\vee Y)$ of the pointed sum of topological spaces $X$ and $Y$ is bounded below by $\TC X$, $\TC Y$ and $\cat (X\times Y)$ (Theorem 3.6~\cite{Dr1}). 
We note that this lower bound works without any conditions.

Since both $X$ and $Y$ are retracts of $X\vee Y$,  the existence of a section of $\Delta^k_{X\vee Y}$ implies the existence of sections of $\Delta^k_X$ and $\Delta^k_Y$. In other words, the topological complexity of $X\vee Y$ is bounded below by $\TC X$ and $\TC Y$. 

To show that $\TC (X\times Y)$ is also bounded by $\cat (X\times Y)$, consider the map of fibrations 
\[
   p\co   \Delta^k_{X\vee Y}|X\times Y\longrightarrow G^k_{X\times Y}
\]
of the restriction of $\Delta^k_{X\vee Y}$  to $X\times Y\subset (X\vee Y)\times (X\vee Y)$ where $k=\TC (X\vee Y)$. The map $p$ is given by 
\[
    \sum t_i f_i\mapsto \sum t_i (p_1\circ f_i, p_2\circ \bar{f}_i),
\] 
where $p_i$ is the projection of $X\vee Y$ to the $i$-th factor, and $\bar{f}_i$ is the path $f_i$ traversed in the opposite direction. Suppose that the fibration $\Delta^k_{X\vee Y}$ admits a section $s_\vee$. Then the fibration $G^k_{X\times Y}$ also admits a section $s_G$ defined as $s_G(x, y)=p\circ s_\vee(x, y)$.  Thus, $\cat(X\times Y)\le k$. 

\subsubsection*{The upper bound for $\TC (X\vee Y)$} In this subsection we show that $\TC (X\vee Y)$ is bounded above by the maximum of $\TC X$, $\TC Y$ and $\cat (X\times Y)$. Let $k$ be the maximum of $\TC X$, $\TC Y$ and $\cat (X\times Y)$. In particular, the fibrations $\Delta^k_X$, $\Delta^k_Y$ and $G^k_{X\times Y}$ admit sections. We need to show that $\TC (X\vee Y)\le k$, i.e., the fibration $\Delta^k_{X \vee Y}$ admits a section. 

We assume that $X$ and $Y$ are $r$-connected CW-complexes of dimension $\le d$ with $\TC X$ or $\TC Y$ at least $(d+1)/(r+1)$ with $r\ge 0$. Without loss of generality we may assume that $\TC X\ge (d+1)/(r+1)$. We have shown that $k=\TC (X\vee Y)\ge \TC X$. Thus, $k\ge (d+1)/(r+1)$.  

Note that if $Z$ is an $(r-1)$-connected space, then $*^{k+1}Z$ is $(k+(k+1)r-1)$-connected. Indeed, the join $*^{k+1}Z$ of spaces is homotopy equivalent to the reduced join of spaces, which, in its turn, is homeomorphic to the reduced suspension $\Sigma^k(Z\wedge \cdots\wedge Z)$, where the number of pointed factors is $k+1$. We may assume that besides the distinguished point $\Sigma^k(Z\wedge \cdots \wedge Z)$ has no cells in dimensions below $k+(k+1)r$. Therefore it is $(k+(k+1)r-1)$-connected. In particular, if 
 $X$ and $Y$ are $r$-connected, then $\Omega (X\vee Y)$ is $(r-1)$-connected, and $*^{k+1}\Omega (X\vee Y)$ is $k+(k+1)r-1$-connected, 
\[
   k+(k+1)r-1=kr+k+r-1=k(r+1)+r-1\ge (d+1)+r-1\ge d.
\]
Thus, the fiber of $\Delta^k_{X\vee Y}$ is at least $d$-connected. We show that if $\Delta^k_{X\vee Y}$ admits sections over $X\times X$, $X\times Y$, $Y\times X$, and $Y\times X$, then it admits a section over $(X\vee Y)^2$. 
Let $A=(X\times X)\cup (Y\times Y)$ and $B=(X\times Y)\cup (Y\times X)$. Sections over $X\times X$, $X\times Y$, $Y\times X$, and $Y\times X$
can be taken such that they agree at $(x_0,x_0)$ where $x_0$ is the wedge point. Thus, there are sections over $A$ and $B$.
Note that the intersection $A\cap B=X\vee X\vee Y\vee Y$ is $d$-dimensional. Since the fiber of the fibration is $d$-connected, Lemma~\ref{paste} implies that that there is a section over $A\cup B=(X\vee Y)^2$.

In view of the retractions $X\vee Y\to X$ and $X\vee Y\to Y$ and the fact that $\Delta^k_X$ and $\Delta^k_Y$ have sections, the fibration $\Delta^k_{X\vee Y}$ admits sections over $X\times X$ and $Y\times Y$. Let us show that it also admits a section over $X\times Y$; the case of $Y\times X$ is similar. To this end, consider a map of fibrations 
\[
   q\co G^k_{X\times Y} \longrightarrow \Delta^k_{X\vee Y}|X\times Y
\]
that takes  a point $\sum t_i (g_i, h_i)$ in the fiber over $(x, y)$ to the point $\sum t_i (g_i\cdot\bar{h}_i)$ where $\cdot$ stands for taking the concatenation of two paths. Since $G^k_{X\times Y}$ admits a section $s$, the map $q$ of fibrations gives rise to the section $q\circ s$ of $\Delta^k_{X\vee Y}$ over $X\times Y$. This completes the proof of the upper bound for $\TC (X\vee Y)$.  

Theorem~\ref{th:3} can be generalized to the following
\begin{theorem}\label{modified}
Let  $d=\max\{\dim X,\dim Y\}$ for connected CW-complexes $X$ and $Y$. Suppose
that $k=\max\{\TC X,\TC Y,\cat(X\times Y)\}\ge d$ and $H^k(X;\mathcal F)=0=H^k(Y;\mathcal G)$ for all local coefficients. Then
\[
\TC(X\vee Y)=   \max\{\TC X, \TC Y, \cat (X\times Y)\}.
\]
\end{theorem}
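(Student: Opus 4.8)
The plan is to run the proof of Theorem~\ref{th:3} (the $r=0$ case of Theorem~\ref{th:4}) essentially verbatim, isolating the one point at which the hypothesis $k\ge d+1$ was used and replacing it by the weaker pair $k\ge d$ and $H^k(X;\mathcal F)=0=H^k(Y;\mathcal G)$. The lower bound $\TC(X\vee Y)\ge\max\{\TC X,\TC Y,\cat(X\times Y)\}$ requires no hypotheses and is exactly the argument already given for the lower bound, so only the upper bound $\TC(X\vee Y)\le k$ needs attention.

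First I would build, just as before, sections of $\Delta^k_{X\vee Y}$ over the four blocks of $(X\vee Y)^2$: over $X\times X$ and $Y\times Y$ by composing sections of $\Delta^k_X$ and $\Delta^k_Y$ (available since $\TC X,\TC Y\le k$) with the retractions $X\vee Y\to X$ and $X\vee Y\to Y$; and over $X\times Y$ and $Y\times X$ by composing a section of $G^k_{X\times Y}$ (available since $\cat(X\times Y)\le k$) with the map $q\co G^k_{X\times Y}\to\Delta^k_{X\vee Y}|X\times Y$. Arranging these to agree at the wedge point $(x_0,x_0)$ produces sections over $A=(X\times X)\cup(Y\times Y)$ and $B=(X\times Y)\cup(Y\times X)$, whose intersection $C=A\cap B=X\vee X\vee Y\vee Y$ is a subcomplex of dimension $d$.

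The single step that must be re-examined is the pasting of the sections over $A$ and $B$. The fiber of $\Delta^k_{X\vee Y}$ is $*^{k+1}\Omega(X\vee Y)$, and since $X$ and $Y$ are merely connected this join is only $(k-1)$-connected rather than $d$-connected. Running the obstruction-theoretic argument in the proof of Lemma~\ref{paste} with skeletal dimension $k$, the obstruction to a fiberwise homotopy over $C$ between the two sections lies in $H^{k+1}(C\times I,C\times\partial I;\pi_k(F))$. Since $\dim C=d\le k$ forces $H^{k+1}(C;\pi_k(F))=0$, the long exact sequence of the pair collapses to $H^{k+1}(C\times I,C\times\partial I;\pi_k(F))\cong H^k(C;\pi_k(F))$. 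In Theorem~\ref{th:3} the stronger bound $k\ge d+1$ made the fiber $d$-connected and hence killed this obstruction for free; here the fiber is one step less connected, so this group is genuinely what must be shown to vanish.

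The key step, where the new hypothesis enters, is the vanishing of $H^k(C;\mathcal F)$ for every local system. Writing $C=X\vee X\vee Y\vee Y$ and splitting cohomology with local coefficients over the wedge in the positive degree $k\ge 1$,
\[
   H^k(C;\mathcal F)\cong H^k(X;\mathcal F_1)\oplus H^k(X;\mathcal F_2)\oplus H^k(Y;\mathcal F_3)\oplus H^k(Y;\mathcal F_4),
\]
every summand vanishes: for $k=d$ this is the assumed top-cohomology vanishing $H^k(X;\mathcal F)=0=H^k(Y;\mathcal G)$, and for $k>d$ it is automatic since $\dim X,\dim Y\le d<k$. Thus the obstruction vanishes, the sections over $A$ and $B$ paste to a section of $\Delta^k_{X\vee Y}$ over $(X\vee Y)^2$, and $\TC(X\vee Y)\le k$. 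I expect the only real obstacle to be this top-cohomology bookkeeping under the wedge splitting, since every other ingredient is inherited unchanged from Theorem~\ref{th:3}.
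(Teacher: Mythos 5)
Your proposal is correct and follows essentially the same route as the paper: the paper likewise reduces to pasting the sections over $A=(X\times X)\cup(Y\times Y)$ and $B=(X\times Y)\cup(Y\times X)$ along the $d$-dimensional wedge $A\cap B=X\vee X\vee Y\vee Y$, invoking Lemma~\ref{paste} with the $(k-1)$-connected fiber and the hypothesis $H^k(X;\mathcal F)=0=H^k(Y;\mathcal G)$ to kill the single remaining obstruction. The only cosmetic difference is that you unfold the obstruction computation of Lemma~\ref{paste} and treat $k\ge d$ uniformly, whereas the paper cites Theorem~\ref{th:3} for $k\ge d+1$ and only argues the case $k=d$.
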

\begin{proof} If $k\ge d+1$, the result follows from Theorem~\ref{th:3}. We may assume that $k=d$.
We need to check the inequality $\TC(X\vee Y)\le k$. We use the above proof for $r=0$ to show that $\Delta^k_{X\vee Y}$ has $(d-1)$-connected
fiber. Then $\Delta^k_{X\vee Y}$ admits a section over $(X\vee Y)^2=A\cup B$ by Lemma~\ref{paste}, since the intersection $A\cap B$ is $d$-dimensional and cohomologically acyclic in dimension $d$.
\end{proof}

\subsection{Proof of Theorem~\ref{th:5}} Let $k=\max\{\TC X,\TC Y,\cat(X\times Y)\}$. We show that the fibration
$\Delta^k_{X\cup Y}$ admits a section. Since $\TC X,\TC Y\le k$, it admits a section over $X\times X$ and $Y\times Y$.
Choosing a common base point $c_0$ for $X$ and $Y$ allows us to embed the base point path space
$P_0(X\times Y)$ to the path space $P(X\cup Y)$ by taking a path $(f,g)$ in $X\times Y$ issued
from the base point $(c_0,c_0)$ to the path $\bar fg$. This defines an embedding of the Ganea-Schwarz fibration $G^k_{X\times Y}$ in the fibration $\Delta^k_{X\cup Y}$. The inequality $\cat(X\times Y)\le k$ implies that $G^k_{X\times Y}$ has a section. Therefore, $\Delta^k_{X\cup Y}$ has a section over $X\times Y$. Similarly, it has a section over $Y\times X$.

The rest of the argument is similar to those of Theorem 3 and 4. We consider the sets $A=X\times X\cup Y\times Y$
and $B=X\times Y\cup Y\times X$ and argue that both sets admit sections of $\Delta^k_{X\cup Y}$. For example, in the case of $A$ the two sections over $X\times X$ and $Y\times Y$ can only disagree over $C\times C$  which has dimension $\le d$ where $C=X\cap Y$. Since the fibration $\Delta^k_{X\cup Y}$ has $d$-connected fiber those two sections can be joined over $C\times C$ by a fiberwise homotopy. This implies the existence of a section over $A$. The argument in the case of $B$ is similar. 
Next we note that $A\cap B=X\times C\cup Y\times C\cup C\times X\cup C\times Y$ is $d$-dimensional. The same argument implies that there is a continuous section over $A\cup B=(X\cup Y)\times(X\cup Y)$.\qed

\section{Amalgamated product} 

\begin{theorem}\label{amalg}
If all groups $A$, $B$, and $C$ are not counterexamples to the Eilenberg-Ganea conjecture, then
$$
\TC(A\ast_CB)\le\max\{\TC A,\TC B, cd(A\times B), cd(A\ast(C\times \Z)\ast B)+cd(C)+1\}.
$$
\end{theorem}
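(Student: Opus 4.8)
The plan is to realize a classifying space of $A\ast_C B$ as a union $X\cup Y$ and feed it into the machinery behind Theorem~\ref{th:5}. Since $A$, $B$, $C$ are not counterexamples to the Eilenberg-Ganea conjecture, I first fix classifying spaces of minimal dimension, so that $\dim BA=cd(A)$, $\dim BB=cd(B)$, $\dim BC=cd(C)$. The inclusions $C\hookrightarrow A$ and $C\hookrightarrow B$ induce maps $BC\to BA$ and $BC\to BB$, and I form the \emph{double mapping cylinder} $Z$ of $BA\leftarrow BC\to BB$. Because the amalgamating homomorphisms are injective, $Z$ is aspherical with $\pi_1(Z)\cong A\ast_C B$ (Bass--Serre theory), so $Z=B(A\ast_C B)$ and $\TC(A\ast_C B)=\TC(Z)$ by homotopy invariance. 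Writing $X$ for the mapping-cylinder part over $BA$ and $Y$ for the part over $BB$, I obtain $Z=X\cup Y$ with $X\simeq BA$, $Y\simeq BB$, and intersection $C_0=X\cap Y\cong BC$.

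Next I compute the dimension controlling the pasting. As a CW product, $\dim(X\times C_0)=\dim X+\dim C_0$, and since $X$ is the mapping cylinder of $BC\to BA$ one has $\dim X=\max\{cd(A),cd(C)+1\}$, with the analogous formula for $Y$. Hence
\[
  d:=\max\{\dim(X\times C_0),\dim(Y\times C_0)\}=\max\{cd(A),cd(B),cd(C)+1\}+cd(C).
\]
Using the formula for the cohomological dimension of a free product together with $cd(C\times\Z)=cd(C)+1$, the first factor equals $cd(A\ast(C\times\Z)\ast B)$, so
\[
  d+1=cd(A\ast(C\times\Z)\ast B)+cd(C)+1,
\]
which is exactly the fourth term in the asserted bound.

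Now set $k=\max\{\TC A,\TC B,cd(A\times B),d+1\}$, the quantity on the right-hand side of the theorem; here $cd(A\times B)=\cat(BA\times BB)=\cat(X\times Y)$ by the Eilenberg-Ganea theorem, while $\TC X=\TC A$ and $\TC Y=\TC B$ by homotopy invariance. By construction $k\ge\max\{\TC X,\TC Y,\cat(X\times Y)\}$ and $k\ge d+1$. I then invoke the argument of Theorem~\ref{th:5}: it produces sections of $\Delta^k_{X\cup Y}$ over $X\times X$, $Y\times Y$, $X\times Y$, $Y\times X$ (using only $k\ge\TC X,\TC Y,\cat(X\times Y)$), and pastes them over the at most $d$-dimensional intersections $C_0\times C_0$ and $X\times C_0\cup Y\times C_0\cup C_0\times X\cup C_0\times Y$ by Lemma~\ref{paste}, the pasting being legitimate precisely because $k\ge d+1$ makes the fiber of $\Delta^k_{X\cup Y}$ $d$-connected. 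This yields a section of $\Delta^k_Z$ and hence $\TC(A\ast_C B)=\TC(Z)\le k$.

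The main obstacle is that Theorem~\ref{th:5} is stated under the hypothesis $\max\{\TC X,\TC Y,\cat(X\times Y)\}\ge d+1$, which need not hold here, since the dimension of the amalgamating piece can dominate the three homotopical invariants. The resolution, and the reason the extra summand $d+1$ is built into the statement, is that the proof of Theorem~\ref{th:5} never uses the equality $k=\max\{\TC X,\TC Y,\cat(X\times Y)\}$: it needs only the two inequalities $k\ge\max\{\TC X,\TC Y,\cat(X\times Y)\}$ (so sections exist over the four product pieces, using that a section of $\Delta^{k'}$ persists to $\Delta^{k}$ for $k\ge k'$) and $k\ge d+1$ (so Lemma~\ref{paste} applies). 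Folding $d+1$ into the definition of $k$ forces both inequalities at once, which is what makes the estimate go through.
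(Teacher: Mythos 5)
Your proof is correct and follows essentially the same route as the paper: the same double mapping cylinder decomposition of $B(A\ast_CB)$ into $X\cup Y$ with $X\cap Y\simeq BC$, the same computation of $d$, and the same pasting of sections over $X\times X$, $Y\times Y$, $X\times Y$, $Y\times X$ via Lemma~\ref{paste}. The only difference is organizational: the paper splits into the cases $\max\{\TC A,\TC B,\cat(BA\times BB)\}\ge d+1$ (where Theorem~\ref{th:5} applies verbatim) and $\le d$ (where the argument is rerun with $k=d+1$), whereas you fold $d+1$ into the definition of $k$ from the outset and observe, correctly, that the proof of Theorem~\ref{th:5} uses only the inequalities $k\ge\max\{\TC X,\TC Y,\cat(X\times Y)\}$ and $k\ge d+1$.
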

\begin{proof} Let classifying spaces $BA$, $BB$, and $BC$ for groups $A$, $B$, and $C$ be such that $cd(A)=\dim BA$, $cd(B)=\dim BB$, and $cd(C)=\dim BC$. Here we used that the groups are not potential counterexamples to the Eilenberg-Ganea conjecture.
For the group $G=A\ast_CB$ we consider its classifying space $BG$ of the form of the double mapping cylinder for the maps
$f:BC\to BA$ and $g:BC\to BB$ induced by the amalgamation homomorphisms $C\to A$ and $C\to B$. 
Thus, $BG=X\cup Y$ with $X\cap Y=BC$, $X$ is homotopy equivalent to $BA$ with $\dim X=\max\{cd(A), cd(C)+1\}$, and $Y$ is homotopy equivalent to $BB$ with $\dim Y=\max\{cd(B),cd(C)+1\}$.
Let $$d=\max\{\dim(X\times BC),\dim(Y\times BC)\}=\max\{\dim X,\dim Y\}+cd(C)=$$
$$\max\{cd(A), cd(C)+1,cd(B)\}+cd(C)=cd(A\ast(C\times\Z)\ast B)+cd(C)
.$$
If $$\max\{\TC A,\TC B,\cat(A\times B)\}\ge d+1,$$ then the result follows from Theorem~\ref{th:5}.
Suppose that $$\max\{\TC A,\TC B,\cat(A\times B)\}\le d.$$ We show that $\Delta^{d+1}_{BG}$ admits a section.
Since $\TC A=\TC X\le d+1$ and $\TC B=\TC Y\le d+1$, there are sections over $X\times X$ and $Y\times Y$. Since $\cat(X\times Y)\le d+1$ there are section over $X\times Y$ and $Y\times X$. Since $(X\times X)\cap (Y\times Y)=BC\times BC$ with $\dim(BC\times BC)=2cd(C)\le d$
and the fiber of $\Delta^{d+1}_{BG}$ is $d$-connected, sections over $X\times X$ and $Y\times Y$ can be adjusted over $BC\times BC$ to have a continuous section over $U=(X\times X)\cup (Y\times Y)$. Similarly, we can arrange a continuous section over $V=(X\times Y)\cup(Y\times X)$. Note that $$U\cap V=
(X\times BC)\cup(Y\times BC)\cup (BC\times X)\cup (BC\times Y).$$ Thus $\dim(U\cap V)=d$. Therefore, we can arrange a continuous section over $U\cup V= BG\times BG$.
\end{proof}

\end{document}